\newtheorem{theorem}{Theorem}[section]
\newtheorem{lemma}[theorem]{Lemma}
\theoremstyle{definition}
\newtheorem{definition}[theorem]{Definition}
\theoremstyle{remark}
\newtheorem{remark}[theorem]{Remark}
\numberwithin{equation}{section}
\newcommand{\Zi}{\mathbb{Z}[i]}
\newcommand{\ds}{\displaystyle}
\newcounter{ctProba}
\DeclareMathAlphabet\mathcal{OMS}{cmsy}{m}{n}
\newcommand{\R}{\mathbb{R}}
\newcommand{\Z}{\mathbb{Z}}
\newcommand{\N}{\mathbb{N}}
\newcommand{\Q}{\mathbb{Q}}
\newcommand{\OK}{\mathcal{O}_K}
\newcommand{\neutralize}[1]{\expandafter\let\csname c@#1\endcsname\count@}
\theoremstyle{plain}
\theoremstyle{plain}
\theoremstyle{plain}
\newtheorem{proposition}[theorem]{Proposition}
\theoremstyle{plain}
\newtheorem{cor}[theorem]{Corollary}
\theoremstyle{definition}
\theoremstyle{remark}
\DeclareMathOperator{\Rez}{Re}
\DeclareMathOperator{\Imz}{Im}
\newtheorem*{rep@theorem}{\rep@title}
\newcommand{\newreptheorem}[2]{%
\newenvironment{rep#1}[1]{%
 \def\rep@title{\normalfont{\textbf{#2 \ref{##1}}}}%
 \begin{rep@theorem}}%
 {\end{rep@theorem}}}
\begin{document}
\newgeometry{top=5.5cm, left=2cm, right=1.8cm}
\title{On the size of Diophantine $m$-tuples in imaginary quadratic number rings}

\author{Nikola Ad\v zaga}

\subjclass[2010]{primary 11D09; secondary 11J68} 

\date{July 4, 2018}

\keywords{Diophantine $m$-tuples, Diophantine approximation, Pell equations, Diophantine equations}

\begin{abstract}
A Diophantine $m$-tuple is a set of $m$ distinct integers such that the product of any two distinct elements plus one is a perfect square. It was recently proven that there is no Diophantine quintuple in positive integers. We study the same problem in the rings of integers of imaginary quadratic fields. By using a gap principle proven by Diophantine approximations, we show that  $m\leqslant 42$. Our proof is relatively simple compared to the proofs of the similar results in positive integers.
\end{abstract}
\maketitle
\setstretch{1.15}

\section{Introduction}
\label{intro}
A long-standing conjecture, motivated by work of Baker and Davenport \cite{baker}, that there is no Diophantine quintuple, was recently proven \cite{nemapetorke}. At a similar time, in \cite{sestorke}, it was found that there are infinitely many Diophantine sextuples in rational numbers. Generalizing results from integers to rationals can be very hard. On the other hand, it is reasonable to assume that the transition to analogous problems in other rings of integers will be easier. However, even in this ambient, there are not many results. E.g.~we find less than $10$ papers solving similar problems in the ring of Gaussian integers. We can highlight \cite{BFT} and \cite{Zrinka}, which deal with the extension of Diophantine triples from one-parameter families. Yet, due to the similarities of this ring with the usual ring of rational integers, we can expect that the similar results will hold and that one can prove them using similar techniques. The goal of this research is to answer how many elements can a Diophantine $m$-tuple in imaginary quadratic number ring have, or, to be more specific and modest, to provide an upper bound on the size of a Diophantine $m$-tuple.

Let $\OK$ be the ring of integers of an imaginary quadratic number field $K$. A Diophantine triple $\{a, b, c\}\subseteq \OK$ induces an elliptic curve $E: y^2=(ax+1)(bx+1)(cx+1)$ defined over $K$. For every $d$ such that $\{a, b, c, d\}$ is Diophantine quadruple, there exist $r, s, t$ such that $ad+1=r^2$, $bd+1=s^2$ and $cd+1=t^2$. We obtain an $\OK$-integral point $(d, rst)$ on $E$. Since an elliptic curve has finitely many integral points \cite{siegel}, every Diophantine triple can be extended with only finitely many elements.  We conclude that every Diophantine $m$-tuple is finite. Unfortunately, this does not provide us with an effective absolute bound on the size of such sets. 
 \restoregeometry 
We approach this problem in the following way. Assume that a Diophantine triple $\{a, b, c\}$ in imaginary quadratic number ring $\OK$ can be extended with a fourth element $d$. By eliminating $d$ from the equations it satisfies ($ad+1=x^2$, $bd+1=y^2$ and $cd+1=z^2$), we get a system of two Pell-type equations with common unknown. 
A solution of this system gives us two simultaneous approximations of square roots close to $1$. Then we apply a variant of Bennett's theorem developed for imaginary quadratic number rings by Jadrijević and Ziegler \cite{borka}. We obtain the gap principle which can be stated as follows. In a Diophantine quadruple, if the third element (by size) is much bigger than the second element, then the fourth element is bounded from above by a power of the third element. Combining this result with a simple lower bound on the largest element of a Diophantine quadruple (in terms of the smallest element), we obtain a proof by contradiction (assuming that there exists a Diophantine $m$-tuple with $m\geqslant 43$). Thus, the main result of this paper is the following theorem.

\begin{reptheorem}{thm:g}
\textit{There is no Diophantine $m$-tuple in imaginary quadratic number ring with $m\geqslant 43$.}
\end{reptheorem}

\section{System of Pell-type equations}
\label{sec:Pells}
Let $\{a, b, c\} \subset \OK$ be a Diophantine triple in the imaginary quadratic number ring $\OK$. Without loss of generality, we may assume $0 < |a| \leqslant |b| \leqslant |c|$. Then there are $r, s$ and $t$ in $\OK$ such that $ab+1 = r^2, ac+1=s^2, bc+1=t^2.$
 
Since the equation $X^2-Y^2=1$ has only trivial solutions ($XY=0$), the numbers $ac$ and $bc$ are not squares in $\OK$. Namely, $ac\neq i^2=-1$ and $|s|\neq 1$ since there are no three numbers $\{a, b, c\}$ of absolute value $2$ or less which make up a Diophantine triple. Similarly, $ab$ is not a square: if we assume the contrary, then $r=0$ and $\{a, b\}=\{-1, 1\}$ (or $\{a, b\}=\{\frac{-1+\sqrt{-3}}{2}, \frac{1+\sqrt{-3}}{2}\}$). There is no $c\neq 0$ such that $\{a, b, c\}$ is a Diophantine triple, because $-c+1=s^2$ and $c+1=t^2$ imply $1-c^2=s^2t^2$, hence $c=0$ or $st=0$, which means $c=\pm 1 \in \{a, b\}$. Similar reasoning resolves the case $\{a, b\}=\{\frac{-1+\sqrt{-3}}{2}, \frac{1+\sqrt{-3}}{2}\}$, subtracting and multiplying equations containing $c$ shows that $c=\pm 1$, but $\{a, b, c\}$ is not a Diophantine triple.

Let us note here that the solutions of this equation ($X^2-Y^2=1$) in the ring of integers of imaginary quadratic field are described in \cite{fjel}. Numbers $ab$, $ac$ and $bc$ are not squares even in $K$ since $\OK$ is integrally closed in $K$.
We have proven the following lemma.
\begin{lemma}\label{acnekva} If $\{a, b, c\}$ is a Diophantine triple in the imaginary quadratic number ring $\OK$ and $abc\neq 0$, then $ab$, $ac$ and $bc$ are not squares in $K$.
\end{lemma}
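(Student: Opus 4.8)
The plan is to argue by contradiction, using repeatedly the only tool established so far: in $\OK$ the equation $X^2-Y^2=1$ has only the degenerate solutions $XY=0$. Suppose one of the three products, say $P\in\{ab,ac,bc\}$, were a square in $\OK$, say $P=w^2$. Since $P+1$ is already a square (it equals one of $r^2,s^2,t^2$), subtracting yields $(P+1)-w^2=1$, a relation of the form $X^2-Y^2=1$ with $X$ the relevant root and $Y=w$. Triviality forces one factor to vanish: either $w=0$, impossible because $abc\neq 0$ makes every product nonzero, or $P+1=0$, i.e.\ $P=-1$. Thus the whole problem reduces to excluding $ab=-1$, $ac=-1$ and $bc=-1$.

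The cases $ac=-1$ and $bc=-1$ are the easy ones. If $ac=-1$ then $a$ and $c$ are both units of $\OK$ (a product of two elements equal to a unit forces each to be a unit), so $|a|=|c|=1$; the ordering $|a|\leqslant|b|\leqslant|c|$ then forces $|b|=1$ as well, so $a,b,c$ are all units. As the unit group of an imaginary quadratic ring is finite (at most the six sixth-roots of unity, in $\Q(\sqrt{-3})$), a direct finite verification shows that no three distinct units form a Diophantine triple, a contradiction. The identical argument disposes of $bc=-1$.

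The product $ab$ is the delicate case and the main obstacle, because $ab=-1$ only forces $a$ and $b$ to be units and says nothing directly about $c$. I would first enumerate the finitely many unit pairs with $ab=-1$: the pair $\{-1,1\}$, available in every $\OK$, together with the extra pair $\{\frac{-1+\sqrt{-3}}{2},\frac{1+\sqrt{-3}}{2}\}$ occurring in $\Q(\sqrt{-3})$. For each, I would feed $\{a,b\}$ into the two remaining equations $ac+1=s^2$ and $bc+1=t^2$ and eliminate: for $\{-1,1\}$ these read $1-c=s^2$ and $1+c=t^2$, whose product is $1-c^2=(st)^2$, i.e.\ $c^2+(st)^2=1$. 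The key manoeuvre is to rewrite this sum of two squares as a difference $c^2-(i\,st)^2=1$, which is legitimate precisely in the field where $-1$ is a square (so that $i\in\OK$)—exactly the situation in which $ab=-1$ could be a square at all. Triviality of $X^2-Y^2=1$ then forces $c=0$ or $st=0$, hence $c\in\{0,1,-1\}\subseteq\{0\}\cup\{a,b\}$, contradicting that $\{a,b,c\}$ is a genuine triple. The $\Q(\sqrt{-3})$ pair is handled by the same elimination (subtracting and multiplying the two equations in $c$), again pinning $c$ down to an already-used value.

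Finally, all of the above shows the three products are not squares in $\OK$; to upgrade to ``not a square in $K$'' I would invoke that $\OK$ is integrally closed in $K$, so an element of $\OK$ that is a square in $K$ is already a square in $\OK$. The step I expect to be most error-prone is the $ab$ case: one must check that the elimination genuinely closes in every field in which the hypothesis ``$ab$ is a square'' can hold, which is why converting $c^2+(st)^2=1$ into the difference form and separately treating the $\Q(\sqrt{-3})$ pair are both needed, rather than simply working in $\Q(i)$.
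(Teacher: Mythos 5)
Your proposal is correct and follows essentially the same route as the paper: reduce each product via the triviality of $X^2-Y^2=1$ to the case $P=-1$, dispose of $ac=-1$ and $bc=-1$ by noting all three elements would be units and checking the finitely many small cases, handle $ab=-1$ through the two unit pairs $\{-1,1\}$ and $\bigl\{\frac{-1+\sqrt{-3}}{2},\frac{1+\sqrt{-3}}{2}\bigr\}$ with the elimination $1-c^2=(st)^2$, and pass from $\OK$ to $K$ by integral closedness. If anything, you are slightly more explicit than the paper at one point: you justify the step ``$c^2+(st)^2=1$ implies $c=0$ or $st=0$'' by rewriting it as $c^2-(ist)^2=1$ and observing that $i\in\OK$ holds exactly when $ab=-1$ can be a square, a detail the paper leaves implicit (and which, as you note, makes the $\Q(\sqrt{-3})$ pair strictly unnecessary, though both you and the paper treat it anyway).
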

If there is $d\in \OK$ such that $\{a, b, c, d\}$ is a Diophantine quadruple, then there are $x, y, z\in\OK$ such that $ad+1 = x^2, bd+1=y^2, cd+1=z^2$. Therefore $a(cd+1)-c(ad+1)=az^2-cx^2$ and $az^2-cx^2=a-c$. Analogously, eliminating $d$ from the second and the third equation we get $bz^2-cy^2 = b-c$. We have obtained a system of the equations
\begin{align}
az^2-cx^2 &= a-c \label{Eq:Pellac}\\
bz^2-cy^2 &= b-c .\label{Eq:Pellbc}
\end{align}
These equations are similar to Pell's equations and their solutions have a very similar structure. The solutions of Pell-type equations ($x^2-Dy^2=N$) in imaginary quadratic rings are described in \cite{fjel}.

From now on, we will always assume that $0$ is not an element of a Diophantine $m$-tuple. We will also assume that the elements are sorted by absolute value (in ascending order).

\section{Gap principle (obtained by Diophantine approximations)}\label{sec:gap}
Here we prove that the solution of \eqref{Eq:Pellac} and \eqref{Eq:Pellbc} provides us two simultaneous approximations of square roots close to $1$ by elements of $K$.
\begin{lemma}\label{tm:apx}
Let $(x, y, z)$ be a solution of the system of equations \eqref{Eq:Pellac} and \eqref{Eq:Pellbc}. Assume that $|c| > 4|b|$ and $|a|\geqslant 2$. If $\displaystyle \theta_1^{(1)} = \pm\frac{s}{a}\sqrt{\frac{a}{c}}, \theta_1^{(2)} = -\theta_1^{(1)} $ and $\displaystyle \theta_2^{(1)} = \pm\frac{t}{b}\sqrt{\frac{b}{c}}, \theta_2^{(2)} = -\theta_2^{(1)}$, where the signs are chosen such that \[ \left|\theta_1^{(1)} - \frac{sx}{az}\right| \leqslant \left|\theta_1^{(2)} - \frac{sx}{az}\right| \quad \text{ and   }\quad \left|\theta_2^{(1)} - \frac{ty}{bz}\right| \leqslant \left|\theta_1^{(2)} - \frac{ty}{bz}\right|,\] then
\begin{center}\vskip -1.5em
\begin{align*}
\left|\theta_1^{(1)} - \frac{sbx}{abz}\right| \leqslant \frac{|s|\cdot|c-a|}{|a|\sqrt{|ac|}} \cdot \frac{1}{|z|^2} &< \frac{21}{16}\frac{|c|}{|a|}\cdot\frac{1}{|z|^2} \quad\text{ and }\\
\left|\theta_2^{(1)} - \frac{tay}{abz}\right| \leqslant \frac{|s|\cdot|c-b|}{|b|\sqrt{|bc|}} \cdot \frac{1}{|z|^2} &< \frac{21}{16}\frac{|c|}{|a|}\cdot\frac{1}{|z|^2}.
\end{align*}
\end{center}
\end{lemma}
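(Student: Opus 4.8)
The plan is to reduce both displayed estimates to one algebraic identity coming from the Pell-type equations \eqref{Eq:Pellac}--\eqref{Eq:Pellbc}, and then to control the resulting denominator using the sign normalization. First I would simplify the approximants: since $s^2 = ac+1$, one has $\theta_1^{(1)} = \frac{s}{a}\sqrt{a/c} = \frac{s}{\sqrt{ac}} = \sqrt{1+\tfrac{1}{ac}}$, a square root close to $1$, and likewise $\theta_2^{(1)} = \frac{t}{\sqrt{bc}} = \sqrt{1+\tfrac{1}{bc}}$. Writing $w=\frac{sx}{az}$ (so that $\frac{sbx}{abz}=w$), the difference of squares collapses via \eqref{Eq:Pellac}:
\[ \left(\theta_1^{(1)}\right)^2 - w^2 = \frac{s^2}{ac}-\frac{s^2x^2}{a^2z^2} = \frac{s^2}{a}\cdot\frac{az^2-cx^2}{acz^2} = \frac{s^2(a-c)}{a^2cz^2}. \]

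The factorization $\theta_1^{(1)}-w = \frac{(\theta_1^{(1)})^2-w^2}{\theta_1^{(1)}+w}$ then reduces the first bound to a lower estimate for $|\theta_1^{(1)}+w|$, which is the crux. The sign was chosen so that $|\theta_1^{(1)}-w|\leqslant|\theta_1^{(2)}-w| = |\theta_1^{(1)}+w|$; combining this with the triangle inequality $2|\theta_1^{(1)}| = |(\theta_1^{(1)}-w)+(\theta_1^{(1)}+w)| \leqslant |\theta_1^{(1)}-w|+|\theta_1^{(1)}+w|$ forces $|\theta_1^{(1)}+w|\geqslant|\theta_1^{(1)}| = \frac{|s|}{\sqrt{|ac|}}$. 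Substituting this into the factorization gives exactly
\[ \left|\theta_1^{(1)}-w\right| \leqslant \frac{|s|^2|a-c|}{|a|^2|c||z|^2}\cdot\frac{\sqrt{|ac|}}{|s|} = \frac{|s|\cdot|c-a|}{|a|\sqrt{|ac|}}\cdot\frac{1}{|z|^2}, \]
the first claimed inequality. For the strict inequality I would bound the two awkward factors crudely: from $|s|^2=|ac+1|\leqslant|a||c|+1$ together with $|a|\geqslant2$ and $|c|>4|b|\geqslant4|a|\geqslant8$ (hence $|a||c|>16$) one gets $|s|^2\leqslant\frac{17}{16}|a||c|$, while $|a|<\frac14|c|$ gives $|c-a|\leqslant|c|+|a|<\frac54|c|$. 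Feeding these in,
\[ \frac{|s|\cdot|c-a|}{|a|\sqrt{|ac|}} < \frac{\tfrac{\sqrt{17}}{4}\sqrt{|a||c|}\cdot\tfrac54|c|}{|a|\sqrt{|a||c|}} = \frac{5\sqrt{17}}{16}\cdot\frac{|c|}{|a|} < \frac{21}{16}\cdot\frac{|c|}{|a|}, \]
where the last step is the numerical check $5\sqrt{17}<21$, i.e. $425<441$.

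The second pair of estimates is handled identically, using \eqref{Eq:Pellbc} instead: with $w_2=\frac{ty}{bz}$ one finds $(\theta_2^{(1)})^2-w_2^2 = \frac{t^2(b-c)}{b^2cz^2}$, the same sign-plus-triangle argument gives $|\theta_2^{(1)}+w_2|\geqslant|\theta_2^{(1)}| = \frac{|t|}{\sqrt{|bc|}}$, and the analogous bounds $|t|^2\leqslant\frac{17}{16}|b||c|$ and $|c-b|<\frac54|c|$ yield $\frac{|t|\cdot|c-b|}{|b|\sqrt{|bc|}} < \frac{5\sqrt{17}}{16}\cdot\frac{|c|}{|b|} < \frac{21}{16}\cdot\frac{|c|}{|a|}$, the last step relaxing $\frac{1}{|b|}\leqslant\frac{1}{|a|}$. (The computation naturally produces $|t|$ in the numerator of the second bound, so I expect the $|s|$ printed there to be a typo.) The one genuinely delicate point is the lower bound $|\theta^{(1)}+w|\geqslant|\theta^{(1)}|$: because we are working in $\mathbb{C}$ rather than $\mathbb{R}$, the sum $\theta^{(1)}+w$ could a priori be small and wreck the estimate, and it is precisely the sign choice together with the triangle inequality that excludes this.
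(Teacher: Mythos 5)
Your proof is correct and follows essentially the same route as the paper's: the same difference-of-squares factorization via \eqref{Eq:Pellac}, the same sign-choice-plus-triangle-inequality lower bound $\left|\theta_1^{(1)}+\frac{sx}{az}\right|\geqslant\left|\theta_1^{(1)}\right|$, and the same numerical estimates (your $\frac{5\sqrt{17}}{16}<\frac{21}{16}$ is exactly the paper's $\frac{\sqrt{17}}{4}<\frac{21}{20}$ after clearing the factor $\frac{5}{4}$ from $|c-a|<\frac{5}{4}|c|$). You are also right that the $|s|$ in the second displayed bound should be $|t|$ — the analogous computation with \eqref{Eq:Pellbc} produces $|t|$, a typo the paper's ``proven analogously'' remark leaves uncorrected.
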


\begin{proof} The following holds
\begin{align*}
\left|\theta_1^{(1)} - \frac{sx}{az}\right| &= \left|(\theta_1^{(1)})^2 - \frac{s^2x^2}{a^2z^2}\right|\cdot \left|\theta_1^{(1)} + \frac{sx}{az}\right|^{-1} = \left|\frac{s^2}{a^2}\right| \cdot \frac{|az^2-cx^2|}{|c||z|^2} \cdot \left|\theta_1^{(2)} - \frac{sx}{az}\right|^{-1}\\
&= \frac{|s|^2 |c-a|}{|a|^2|c|} \left|\theta_1^{(2)} - \frac{sx}{az}\right|^{-1} \cdot \frac{1}{|z|^2}.
\end{align*}

Since 
$\ds 2\left|\theta_1^{(2)} - \frac{sx}{az}\right| \geqslant \left|\theta_1^{(2)} - \frac{sx}{az}\right|+\left|\theta_1^{(1)} - \frac{sx}{az}\right| \geqslant \left|\theta_1^{(2)} - \frac{sx}{az} - \left(\theta_1^{(1)} - \frac{sx}{az}\right)\right| = |\theta_1^{(2)}-\theta_1^{(1)}| = 2 \left| \frac{s}{a}\sqrt{\frac{a}{c}}\right|$, we conclude that $\displaystyle  \left|\theta_1^{(2)} - \frac{sx}{az}\right| \geqslant \left|\frac{s}{a}\sqrt{\frac{a}{c}} \right|$.

Hence $\displaystyle \left|\theta_1^{(1)} - \frac{sbx}{abz}\right| \leqslant  \frac{|s|^2 |c-a|}{|a|^2|c|} \left| \frac{a}{s} \sqrt{\frac{c}{a}}\right| \cdot\frac{1}{|z|^2}$, which implies the first inequality in this lemma's statement. We also need to prove $\displaystyle \frac{|s|\cdot|c-a|}{|a|\sqrt{|ac|}} \cdot \frac{1}{|z|^2} < \frac{21}{16}\frac{|c|}{|a|}\cdot\frac{1}{|z|^2}$, i.e.~$|\sqrt{ac+1}|\cdot|c-a| < \dfrac{21}{16}|c|\sqrt{|ac|}$, which is equivalent to $\displaystyle \left| \sqrt{1+\frac{1}{ac}}\right|< \frac{21}{16}\frac{|c|}{|c-a|}$. The condition $|c| > 4|a|$ implies that $\dfrac{21}{16}\dfrac{|c|}{|c-a|} \geqslant \dfrac{21}{20}$. Left-hand side is
$\displaystyle \left| \sqrt{1+\frac{1}{ac}}\right| \leqslant \sqrt{\left|1+\frac{1}{|ac|}\right|} \leqslant \sqrt{1+\frac{1}{16}} = \frac{\sqrt{17}}{4}$, implying the second inequality stated. 

The other pair of inequalities is proven analogously (we have used $|c|>4|a|$ and $|a|\geqslant 2$). 
\end{proof}

We will now apply Jadrijević--Ziegler theorem from \cite{borka}. 

\newpage \begin{theorem}[{Jadrijević--Ziegler \cite[Theorem 7.1]{borka}}]
Let $\theta_i = \sqrt{1+\frac{a_i}{T}}, i = 1,2$ with $a_1$ and $a_2$ distinct quadratic integers in the imaginary quadratic field $K$ and let $T$ be an algebraic integer of $K$. Further, let $M=\max\{|a_1|,|a_2|\}$, $|T| > M$ and \[ L=\frac{27}{16|a_1|^2|a_2|^2|a_1-a_2|^2}(|T|-M)^2 > 1. \]

\noindent Then \[ \max\left\{\left|\theta_1-\frac{p_1}{q}\right|, \left|\theta_2-\frac{p_2}{q}\right|\right\} > c|q|^{-\lambda},\]
for all algebraic integers $p_1, p_2, q\in K$, where $\ds \lambda = 1+\frac{\log P}{\log L}$, $c^{-1} = 4pP(\max\{1, 2l\})^{\lambda-1}$,

\[ l = \frac{27}{64}\frac{|T|}{|T|-M}, p=\sqrt{\frac{2|T|+3M}{2|T|-2M}},
P=16\frac{|a_1|^2|a_2|^2|a_1-a_2|^2}{\min\{|a_1|, |a_2|, |a_1-a_2|\}^3} (2|T|+3M).\]
\end{theorem}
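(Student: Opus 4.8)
The plan is to prove this as an \emph{effective simultaneous irrationality measure}, following the hypergeometric (Padé approximation) method of Thue--Siegel--Baker in the refined form due to Rickert and Bennett, transplanted from $\Q$ and $\R$ to the imaginary quadratic field $K$. Since $\theta_i = \sqrt{1 + a_i/T}$ with $|a_i/T| < 1$, the two binomial functions $(1+a_1 x)^{1/2}$ and $(1+a_2 x)^{1/2}$ admit explicit, simultaneously good Padé approximations near $x = 0$; specializing the approximation variable to $x = 1/T$ produces an infinite family of excellent $K$-rational approximations to the pair $(\theta_1,\theta_2)$, and any \emph{hypothetical} competitor $(p_1/q,\,p_2/q)$ can then be played off against this family to force the stated lower bound.

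First I would construct the approximants. For each integer $n \ge 1$ one forms, via contour integrals of the type $\frac{1}{2\pi i}\oint \frac{(1+zx)^{1/2}}{(z-a_i)\prod_j (z-a_j)^n}\,dz$ (equivalently, via the Gauss hypergeometric polynomials ${}_2F_1(-n,-n-\tfrac12;\,\cdot\,)$) polynomials $A_n(x), B_{n,1}(x), B_{n,2}(x)$ together with remainders $R_{n,i}(x) = A_n(x)(1+a_i x)^{1/2} - B_{n,i}(x)$ vanishing to order $2n+1$ at $x=0$. Setting $x = 1/T$ and clearing the denominators of the binomial coefficients yields algebraic integers $q_n, p_{n,1}, p_{n,2} \in \OK$. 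The technical heart of this step is controlling the \emph{content} that must be cleared to land in $\OK$: this common denominator produces the factor $16$ and the quantity $\min\{|a_1|,|a_2|,|a_1-a_2|\}^3$ appearing in $P$, while the Euler beta-integral representation of the remainders yields bounds of the shape $|q_n| \le c_1 P^{\,n}$ and $|q_n\theta_i - p_{n,i}| \le c_2 L^{-n}$ with bounded $c_1, c_2$, in which $L$ and $P$ are precisely the quantities in the statement.

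Next I would establish the non-vanishing of the \emph{approximation determinant}, the linear-independence ingredient that keeps the construction non-degenerate. One shows that for at least one of two consecutive indices the $3\times 3$ determinant $\Delta$ built from the triples $(q_n,p_{n,1},p_{n,2})$, $(q_{n+1},p_{n+1,1},p_{n+1,2})$ and the target $(q,p_1,p_2)$ is nonzero; for the archimedean construction this follows from the explicit leading terms of the Padé system, and over $K$ the same computation applies once one invokes that $a_1,a_2$ are distinct and $T \ne 0$, the determinant being, up to a nonzero constant, a power of $x$ evaluated at $x = 1/T \ne 0$. Here the imaginary quadratic hypothesis enters decisively: a nonzero element of $\OK$ satisfies $|\Delta|^2 = |N_{K/\Q}(\Delta)| \ge 1$, so $|\Delta| \ge 1$, exactly the analogue of ``a nonzero rational integer has absolute value at least $1$'' that the real-field argument relies upon.

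Finally I would assemble the pieces by the standard gap argument. Writing $\delta = \max\{|\theta_1 - p_1/q|,\,|\theta_2 - p_2/q|\}$, choose $n$ as the least index for which $|q_n|$ passes a threshold comparable to $\delta^{-1}$; expanding $\Delta$ along the target row and substituting $B_{n,i} = A_n\theta_i - R_{n,i}$ bounds $|\Delta|$ by a constant times $|q|$, the constructed errors $|q_n\theta_i - p_{n,i}|$, and $\delta$. Combining this with $|\Delta| \ge 1$ and the estimates above, then optimizing over $n$ and passing from the discrete index to the optimal real value, produces $\delta > c|q|^{-\lambda}$ with $\lambda = 1 + \log P/\log L$; the auxiliary constants $l$ and $p$ surface when one bounds the two sign branches of each square root and makes the continuous optimization explicit, and the hypothesis $L > 1$ is exactly what guarantees that the constructed errors decay faster than the denominators grow. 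I expect the genuine obstacle to be neither the construction nor the determinant but the \emph{sharp bookkeeping of constants}: extracting the precise numerical factors $\tfrac{27}{16}$, $\tfrac{27}{64}$ and $2|T|+3M$ demands tight beta-integral estimates for the remainders together with an exact bound on the common denominator of the approximants, and it is this quantitative refinement --- not the qualitative approximation statement --- that ultimately makes the theorem strong enough to feed into Lemma~\ref{tm:apx}.
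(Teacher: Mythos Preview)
The paper does not prove this theorem at all: it is quoted verbatim as Theorem~7.1 of Jadrijevi\'c--Ziegler \cite{borka} and then immediately applied as a black box in the proof of Proposition~\ref{prop:alpha}. There is therefore no ``paper's own proof'' to compare your proposal against.

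That said, your outline is an accurate sketch of how such effective simultaneous irrationality measures are established, and it is essentially the method used in \cite{borka}. The Pad\'e/hypergeometric construction following Rickert and Bennett, the nonvanishing of the $3\times 3$ approximation determinant, and the pigeonhole/gap argument are exactly the three ingredients; your observation that the imaginary quadratic hypothesis enters precisely through $|N_{K/\Q}(\Delta)|\ge 1$ for nonzero $\Delta\in\OK$ is the correct identification of where the argument departs from the rational case. The only caveat is that your proposal is a high-level roadmap rather than a proof: the explicit constants $\tfrac{27}{16}$, $\tfrac{27}{64}$, $2|T|+3M$, and the cube $\min\{|a_1|,|a_2|,|a_1-a_2|\}^3$ emerge from careful bookkeeping of the contour-integral estimates and the denominators of the binomial coefficients, and verifying them would require reproducing the detailed computations of \cite{borka} (which in turn follow Bennett's sharpening of Rickert). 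If your goal is to understand or reconstruct the proof, your plan is sound; if your goal is to supply a proof for the present paper, none is needed, since the result is used only by citation.
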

Using this theorem, we will show that, if the second and the third element of a Diophantine $m$-tuple are sufficiently away from each other, then the fourth element is bounded by a power of the third element. More precisely, we prove the following proposition.

\begin{proposition}[Gap principle]\label{prop:alpha}
If $\{a, b, c, d\} \subseteq \OK$ is a Diophantine quadruple such that $|ac| \geqslant 9$, $\ds |b| \geqslant \frac 32 |a|$, $|b| > 5$ and $|c| > |b|^{15}$, then $|d| < 4278^{20} |c|^{50}$.
\end{proposition}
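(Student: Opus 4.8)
The plan is to pit the two simultaneous approximations furnished by Lemma~\ref{tm:apx} against the lower bound of the Jadrijević--Ziegler theorem, extract from the clash an explicit bound on $|z|$ (where $cd+1=z^2$), and then convert it into the claimed bound on $|d|$. First I would match the quantities $\theta_1^{(1)}=\frac{s}{a}\sqrt{\frac{a}{c}}$ and $\theta_2^{(1)}=\frac{t}{b}\sqrt{\frac{b}{c}}$ of Lemma~\ref{tm:apx} with the roots $\theta_i=\sqrt{1+a_i/T}$ of the theorem: squaring gives $(\theta_1^{(1)})^2=1+\frac{1}{ac}$ and $(\theta_2^{(1)})^2=1+\frac{1}{bc}$, so the common choice $T=abc$, $a_1=b$, $a_2=a$ realises both at once, with $M=\max\{|a|,|b|\}=|b|$, $|a_1-a_2|=|b-a|$ and $|T|=|abc|$. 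The two are distinct quadratic integers because $|b|\geqslant\frac32|a|>|a|$. The hypothesis $|ac|\geqslant 9$ gives $|T|>M$ (as $|T|/M=|ac|>1$), makes $l=\frac{27}{64}\frac{|ac|}{|ac|-1}<1$ so that $\max\{1,2l\}=1$, and keeps $p=\sqrt{\frac{2|ac|+3}{2|ac|-2}}$ between $1$ and $\frac{\sqrt{21}}{4}$; writing the theorem's constant as $C$ (to avoid clash with the element $c$), this collapses $C^{-1}=4pP(\max\{1,2l\})^{\lambda-1}$ to $C^{-1}=4pP\leqslant\sqrt{21}\,P$.

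Next I would invoke the theorem with the algebraic integers $q=abz$, $p_1=sbx$, $p_2=tay$ of $\OK$, for which $p_1/q=\frac{sbx}{abz}$ and $p_2/q=\frac{tay}{abz}$ are precisely the approximations appearing in Lemma~\ref{tm:apx}. Choosing the branches of $\theta_1,\theta_2$ to agree with $\theta_1^{(1)},\theta_2^{(1)}$, Lemma~\ref{tm:apx} bounds $\max\{|\theta_1-p_1/q|,|\theta_2-p_2/q|\}$ from above by $\frac{21}{16}\frac{|c|}{|a|}|z|^{-2}$, while the theorem bounds the same quantity from below by $C|abz|^{-\lambda}$. Comparing the two,
\[ C|ab|^{-\lambda}|z|^{-\lambda} < \tfrac{21}{16}\tfrac{|c|}{|a|}|z|^{-2}, \qquad\text{hence}\qquad |z|^{2-\lambda} < \tfrac{21}{16}\tfrac{|c|}{|a|}\,|ab|^{\lambda}\,C^{-1} \leqslant \tfrac{21}{16}\tfrac{|c|}{|a|}\,|ab|^{\lambda}\,\sqrt{21}\,P, \]
which is an explicit inequality in $|a|,|b|,|c|$ and $\lambda$.

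The decisive, and \textbf{hardest}, step is to show $2-\lambda>0$, i.e.\ $P<L$, and to bound $2-\lambda$ away from $0$; only then does the displayed inequality bound $|z|$ rather than being vacuous. With the above substitutions $L=\frac{27(|ac|-1)^2}{16|a|^2|b-a|^2}$ and $P=16\frac{|a|^2|b|^2|b-a|^2}{\min\{|a|,|b|,|b-a|\}^3}(2|abc|+3|b|)$. Using $|b|\geqslant\frac32|a|$ one gets $\min\{|a|,|b|,|b-a|\}\geqslant\frac12|a|$ and $\frac13|b|\leqslant|b-a|\leqslant\frac53|b|$, so that $L\asymp|c|^2/|b-a|^2$ and $P\ll|b|^5|c|$, whence $L/P\gg|c|/|b|^7$. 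The gap hypothesis $|c|>|b|^{15}$ then forces $L/P\gg|b|^8>1$, giving $\lambda<2$, and moreover pins down an explicit positive lower bound for $2-\lambda=\log(L/P)/\log L$ (the worst case being $|c|$ near $|b|^{15}$ with $|b|$ near $5$). This is where every hypothesis of the proposition earns its keep: $|ac|\geqslant 9$, $|b|\geqslant\frac32|a|$, $|b|>5$ and $|c|>|b|^{15}$ are exactly what keep $\lambda$ safely below $2$ and the auxiliary constants $l,p$ under control.

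Finally, feeding the explicit lower bound for $2-\lambda$ together with the estimates $P\ll|b|^5|c|$, $|ab|^\lambda\leqslant|a|^2|b|^2$ (valid since $|a|,|b|\geqslant 1$ and $\lambda<2$) into the displayed inequality yields $|z|^{2-\lambda}\ll|b|^8|c|^2$; absorbing the $|b|$-powers via $|b|^8<|c|$ gives $|z|<(\text{explicit constant})\,|c|^{\kappa}$ for an explicit $\kappa$. Since $cd+1=z^2$ gives $|d|\leqslant(|z|^2+1)/|c|$, this becomes a bound of shape $|d|<(\text{explicit constant})\,|c|^{2\kappa-1}$. Carrying the constants honestly and rounding generously produces the stated $|d|<4278^{20}|c|^{50}$; both the exponent $50$ and the base $4278^{20}$ are deliberately crude over-estimates chosen so as to hold uniformly over all admissible triples $\{a,b,c\}$, and I would not attempt to sharpen them.
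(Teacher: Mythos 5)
Your proposal is correct and follows essentially the same route as the paper's own proof: the same identification $T=abc$, $a_1=b$, $a_2=a$, $M=|b|$ with $q=abz$, $p_1=sbx$, $p_2=tay$, the same collapse of the theorem's constant via $2l<1$ and $p\leqslant \sqrt{21}/4$, the same juxtaposition of Lemma~\ref{tm:apx} with the Jadrijević--Ziegler lower bound, and the same decisive use of $|c|>|b|^{15}$ to keep $\lambda$ away from $2$ (the paper makes your bound $2-\lambda=\log(L/P)/\log L$ explicit by proving $P<L^{0.9}$, i.e.\ $\lambda<1.9$, before converting $|z|<4728^{10}|c|^{25.3}$ into the bound on $|d|$ via $|d|\leqslant(|z|^2+1)/|c|$). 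One small correction: $\max\{1,2l\}=1$ requires $l\leqslant\frac12$, not merely $l<1$ as you state, but this is harmless here since $|ac|\geqslant 9$ gives $l=\frac{27}{64}\cdot\frac{|ac|}{|ac|-1}\leqslant\frac{27}{64}\cdot\frac{9}{8}<\frac12$.
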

\begin{proof}
The solution $(x, y, z)$ of \eqref{Eq:Pellac} and \eqref{Eq:Pellbc} gives us simultaneous approximations of \[ \theta_1 = \pm\frac{s}{a}\sqrt{\frac{a}{c}} = \pm \sqrt{\frac{s^2a}{a^2c}} = \pm \sqrt{\frac{ac+1}{ac}} =\pm \sqrt{1+\frac{b}{abc}} \quad \text{ and } \quad \theta_2 =\pm \sqrt{1+\frac{a}{abc}}.\] 

\noindent We let $a_0 = 0, a_1 = b, a_2 = a, T=abc, M=|b|$, and note that $\ds l = \frac{27|abc|}{64(|abc|-|b|)} < \frac 12$,
since this is equivalent to $27|ac|<32(|ac|-1)$, which holds for $\ds |ac| > \frac{32}{5}$. Similarly,
\begin{equation}\label{ineq:p}
p = \sqrt{\frac{2|abc|+3|b|}{2|abc|-2|b|}} = \sqrt{1+\frac{5}{2(|ac|-1)}} \leqslant \sqrt{1+\frac{5}{2(9-1)}} = \sqrt{\frac{21}{16}}.
\end{equation}
Since $l < \dfrac 12$, it follows that $\displaystyle c = \frac{1}{4pP}$. Hence, by \eqref{ineq:p}, $\displaystyle c \geqslant \frac{1}{\sqrt{21}P}$.

\noindent We also observe that $\ds \min\{|a|, |b|, |b-a|\} \geqslant \frac{|a|}{2}$ because $\ds |b-a|\geqslant |b|-|a|\geqslant \frac{|a|}{2}$. Hence
\begin{align*}P &= 16\frac{|a|^2|b|^2|b-a|^2}{ \min\{|a|, |b|, |b-a|\}^3}(2|abc|+3|b|) \leqslant 128 \frac{|b|^3|b-a|^2}{|a|}(2|ac|+3) \text{  and} \\
L &= \frac{27}{16|a|^2|b|^2|b-a|^2}(|abc|-|b|)^2 = \frac{27(|ac|-1)^2}{16|a|^2|b-a|^2}.
\end{align*}

Here, the condition $L > 1$ of Jadrijević-Ziegler theorem is equivalent to $27(|ac|-1)^2 > 16|a|^2|b-a|^2$. Taking the square root, we get a simpler claim $3 \sqrt{3}(|ac|-1) > 4|a||b-a|$. Since $|c| > |b|^3$ (and $|b| \geqslant \frac 32 |a|$), even a stronger claim, $|ac|-1 > |a||b-a|$, holds. Namely, using the assumptions and the triangle inequality we get $|ac|-1 > |a|\cdot|b|^3-1 > 2 |a|^2 |b|-1 > |a|\cdot|b|+|a^2| \geqslant |ab-a^2|=|a||b-a|$.


We observe that $\lambda > 1$ since both $P$ and $L$ are greater than $1$ ($|b|>|a|$ implies $P>1$).
We will show that $|c|>|b|^{15}$ implies $\lambda < 1.9$. Since $\ds \lambda = 1 +\frac{\log P}{\log L}$, we obtain the equivalent statements $\ds\lambda < 1.9$, $\ds \frac{\log P}{\log L} < 0.9$, and $P < L^{0.9}$. Plugging $L$ and using the inequality proven for $P$, we see that we should prove
\[ 128|b|^3 |b-a|^{3.8}|a|^{0.8}(2|ac|+3) < \left(\frac{27}{16}\right)^{0.9} (|ac|-1)^{1.8}.\]

\noindent Since $\ds |ac|-1 >\frac{8}{21}(2|ac|+3)$, it suffices to show that $\ds 336|b|^3|b-a|^{3.8}|a|^{0.8} < \left(\frac{27}{16}\right)^{0.9} (|ac|-1)^{0.8}$. We prove the stronger inequality
\begin{equation}\label{ineq:lmbdokaz}
210|b|^3|b-a|^{3.8}|a|^{0.8} < (|ac|-1)^{0.8}.
\end{equation}

\noindent The right-hand side of \eqref{ineq:lmbdokaz}, $(|ac|-1)^{0.8} $ is greater than $(|ac|)^{0.8}-1$, because the function $f(t) = (t-1)^{0.8}-t^{0.8}+1$ is $0$ at $t=1$ and the function is increasing. Hence, if $|c| > |b|^{15}$, then $(|ac|-1)^{0.8} > |a|^{0.8}|b|^{12}-1$, which is greater than the left-hand side of \eqref{ineq:lmbdokaz}, $210|b|^3|b-a|^{3.8}|a|^{0.8}$, because of the larger degree of $|b|$. More precisely, since $\ds |b|\geqslant \frac 32|a|$, i.e.~$\ds|a|\leqslant \frac 23 |b|$, we conclude that
\[ 210|b|^3|b-a|^{3.8}|a|^{0.8} \leqslant 210|b|^3\left(\frac 53 |b| \right)^{3.8} \left|\frac{2b}{3}\right|^{0.8} < 1058 |b|^{7.6} < |b|^{12}-1.\]

\noindent The last inequality is again obtained by simple analysis of auxiliary function $f(t)=t^{12}-1058t^{7.6}-1$ whose largest root $4.86836$ is determined numerically. Since $|b| > 5$, we have proven that $\lambda < 1.9$.

Jadrijević-Ziegler theorem, together with Lemma \ref{tm:apx}, yields
\[ \frac{21}{16} \frac{|c|}{|a|} \cdot \frac{1}{|z|^2} > \frac{1}{\sqrt{21}P} |abz|^{-\lambda} \geqslant \frac{|a|}{\sqrt{21}\cdot  128|b|^3|b-a|^2(2|ac|+3)} |abz|^{-\lambda},\]
implying $\ds 168\sqrt{21} \frac{|c|}{|a|^2} |b|^3 |b-a|^2 (2|ac|+3)\cdot |ab|^\lambda > |z|^{2-\lambda} > |z|^{0.1}$.
Therefore, \begin{align*}
|z|^{0.1} &< 168\sqrt{21}|c|\cdot 3|ac| \cdot |b-a|^2 |b|^{3+\lambda}|a|^{\lambda-2} \\
&< 504\sqrt{21}|c|^2 \cdot \frac 23 |b| \cdot \left(\frac 53 |b|\right)^2 |b|^{4.9} \quad (\text{since } |b|\geqslant 2|a| \text{ and } \lambda<1.9) \\
&< 4728 |c|^2 |b|^{7.9} < 4728 |c|^{\frac{30+7.9}{15}} < 4728 |c|^{2.53}
\end{align*}

\noindent and, finally, $|z| < 4728^{10} |c|^{2.53\cdot 10} = 4728^{10} |c|^{25.3}$.
\vskip 0.25em
We conclude $\ds
|d| = \frac{|z^2-1|}{|c|} \leqslant \frac{|z|^2+1}{|c|} \leqslant \frac{4728^{20}|c|^{50.6}+1}{|c|} < 4728^{20}|c|^{50}$. 
\end{proof}

\section{A lower bound on the element extending a Diophantine triple}
\label{sec:lower}
In the previous section we have proved an upper bound on the fourth element of a Diophantine $m$-tuple. We will now find a lower bound so that we can juxtapose these two bounds. To find it, we need a definition and a few lemmas.

\begin{definition}A Diophantine triple $\{a, b, c\}$
is \emph{regular} if $c=a+b\pm 2r$, where $r^2=ab+1$.
\end{definition}

\begin{lemma}If $\{a, b, c, d\}$ is a Diophantine quadruple such that $2 \leqslant |a| \leqslant |b| \leqslant |c| \leqslant |d|$, then at least one of the triples $\{a, b, c\}$ and $\{a, b, d\}$ is not regular, i.~e.~it is impossible that $c=a+b-2r$ and $d=a+b+2r$ (or vice versa), where $ab+1=r^2$.
\end{lemma}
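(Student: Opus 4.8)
The plan is to assume, toward a contradiction, that both triples are regular, so that $\{c,d\} = \{a+b-2r,\,a+b+2r\}$ with $r^2 = ab+1$; say $c = a+b-2r$ and $d = a+b+2r$ (the product computed below is symmetric in $c,d$, so the labeling is irrelevant). The first thing I would observe is that regularity already forces every pairwise product except $cd+1$ to be a square: with $c = a+b-2r$ one has $ac+1 = (a-r)^2$ and $bc+1 = (b-r)^2$, and likewise $ad+1 = (a+r)^2$, $bd+1 = (b+r)^2$ for $d$. Hence the only genuine condition remaining from ``$\{a,b,c,d\}$ is a Diophantine quadruple'' is that $cd+1$ be a perfect square in $\OK$.

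Next I would compute $cd = (a+b)^2 - 4r^2 = (a+b)^2 - 4(ab+1) = (a-b)^2 - 4$, so that $cd+1 = (a-b)^2 - 3$. Writing $cd+1 = w^2$ with $w \in \OK$ gives the factorization $3 = (a-b-w)(a-b+w)$. Taking norms, the two factors have norms multiplying to $9$, and since these norms are nonzero nonnegative integers each lies in $\{1,3,9\}$; thus each factor has absolute value in $\{1,\sqrt3,3\}$ with product $3$. As their difference is $2w$, the triangle inequality yields $|w|\le 2$. The key rewriting is now $cd = w^2 - 1$, which gives the sharp bound $|cd| = |w^2-1| \le |w|^2 + 1 \le 5$.

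For the matching lower bound I would use the ordering $|b|\le|c|\le|d|$. From $d-c = 4r$ and $|c|\le|d|$ we get $4|r| = |d-c| \le 2|d|$, so $|d| \ge 2|r|$, while $|r|^2 = |ab+1| \ge |a||b|-1 \ge 3$ forces $|r| \ge \sqrt3$. Combining with $|c| \ge |b| \ge 2$ gives $|cd| = |c||d| \ge 2|b|\,|r| \ge 4\sqrt3 > 5$, contradicting the upper bound and finishing the proof.

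The one point that has to be handled correctly for the numerics to close — and the main obstacle — is to bound $cd$ through the identity $cd = w^2-1$ rather than directly through $(a-b)^2 - 4$. The naive estimate $|(a-b)^2-4| \le |a-b|^2 + 4 \le 8$ is too weak, since the lower bound $|cd| \ge 4\sqrt3 \approx 6.93$ sits below $8$ and yields no contradiction; passing to $w^2-1$ sharpens the upper bound to $5$, which is exactly what creates the gap. I would also note that the hypothesis $|b|\le|c|$ already excludes $c=0$, so the degenerate possibility $cd=0$ needs no separate treatment.
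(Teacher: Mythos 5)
Your proof is correct, but it takes a genuinely different and notably simpler route than the paper. Both arguments open identically: from double regularity one gets $cd+1=(a-b)^2-3=w^2$ and hence the factorization $(a-b-w)(a-b+w)=3$ in $\OK$. At this point the paper classifies the solutions of this factorization field by field: it writes out the norm form for each discriminant $D$, shows there are no non-unit factors of norm $3$ or $9$ once $|D|$ is moderately large, and then disposes of the remaining rings ($D=-1,-2,-3,-7$ and the Eisenstein case) by explicit norm-equation computations and finite checks of small Diophantine triples (e.g.\ ruling out $\{-2,2,-2\sqrt{-3},2\sqrt{-3}\}$ by hand, and small quadruples in $\Z[\sqrt{-2}]$ and $\Z[i]$ where the analysis yields $cd=-3$ or $cd=-5$). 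You replace all of this with a uniform estimate: since every nonzero element of $\OK$ has norm a positive rational integer, the two factors have absolute values in $[1,3]$ (indeed in $\{1,\sqrt3,3\}$) with product $3$, so $|2w|\leqslant 4$ and $|cd|=|w^2-1|\leqslant 5$; against this you set the lower bound $|cd|\geqslant |b|\cdot 2|r|\geqslant 4\sqrt3>5$, obtained from $|d-c|=4|r|\leqslant 2|d|$ and $|r|^2=|ab+1|\geqslant |ab|-1\geqslant 3$ — a step the paper never takes, as it instead bounds $|c|$ from above in each residual case and exhausts the possibilities. Your diagnosis of the key point is also right: the bound must go through $cd=w^2-1$ rather than $|(a-b)^2-4|\leqslant 8$, since $4\sqrt3<8$ but $4\sqrt3>5$. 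What each approach buys: yours is shorter, fully uniform in $\OK$, needs no computer or case enumeration, and is less error-prone (the paper's $D=-3$ subcase in fact suffers a sign slip in the completed-square norm form, which leads it to the spurious value $cd=-4\sqrt{-3}$ with $|cd|=4\sqrt3$ — sitting exactly on your lower bound — though its conclusion survives via the subsequent triple check); the paper's exhaustive analysis, for its part, exhibits the concrete near-miss configurations in the small rings, information your soft estimate deliberately discards.
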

\begin{proof}Assume the contrary. Then $cd=(a+b)^2-4r^2=a^2+2ab+b^2-4(ab+1)=a^2-2ab+b^2-4$ and $cd+1=(a-b)^2-3$. Since $\{c, d\}$ is a Diophantine pair, there is an integer $z\in\OK$ such that $cd+1=z^2$. Hence $(a-b)^2-3=z^2$, i.e.~$(a-b-z)(a-b+z)=3$.  Therefore, on the left-hand side there are two elements of $\OK$ of absolute value less than $3$ (or one of them is a unit) and their norms are divisors of $9$.

First we deal with non-unit solutions in the case $K=\Q[\sqrt{D}]$ where $D\equiv 1\pmod{4}$. Let $\rho=\frac{-1+\sqrt{D}}{2}$ be the generator of $\OK$ over $\Z$. Using the fact that the absolute value of the norm of $x+y\rho$ is $\left|x^2+xy-\frac{D-1}{4}y^2\right| = (x+\frac{y}{2})^2+\frac{|D|}{4}y^2=3$, we easily see that for $|D|\geqslant 13$, $y$ must be $0$ and $x+\frac{y}{2}=\pm\sqrt{3}$, which is impossible. For $D=-7$, multiplying the equation by $4$, we get $(2x+y)^2+7y^2=12$, which has no solutions in integers $x$ and $y$. For $D=-3$ we get the equation $(2x+y)^2+3y^2=12$ which has solutions $x=-1, y=2$ and $x=1, y=-2$. This gives us $a-b-z=-1+2\rho, a-b+z=1-2\rho$ (or vice versa) and $cd=z^2-1=-4\sqrt{-3}$ (or $cd=-4$), which implies that $|c| \leqslant \sqrt{4\sqrt{3}}$. Now we check all triples with absolute value of elements between $2$ and $\sqrt{4\sqrt{3}}$ to find only two Diophantine triples $\{-2, 2, -2\sqrt{-3}\}$ and $\{-2, 2, 2\sqrt{-3}\}$. However, $-2\sqrt{-3}\cdot 2\sqrt{-3} + 1=13$ is not a square, so $\{a, b, a+b-2r, a+b+2r\} = \{-2, 2, -2\sqrt{-3}, 2\sqrt{-3}\}$ is not a Diophantine quadruple.

Now let $K=\Q[\sqrt{D}]$ where $D\equiv 2, 3 \pmod{4}$. Analogously as in the previous case, $x^2+|D|y^2 < 3$ has no non-unit solutions for $|D|\geqslant 6$. Similarly, for $D=-2$, we get the solution $a-b\pm z=1\pm \sqrt{-2}$, which implies $2z=\pm 2\sqrt{-2}$, i.e.~$cd=-3$. However, one checks that there is no Diophantine quadruple in $\Z[\sqrt{-2}]$ with such small elements (and $|a|\geqslant 2$).

Unit solutions are easy to check. E.g.~in $\Zi$ number $3$ is prime, and it can be factored in $\Zi$ in the following ways: $3=1\cdot 3, 3=-1\cdot(-3), 3=i\cdot(-3i), 3=-i\cdot 3i$, up to the order of the factors. In first two cases, subtracting $a-b+z$ and $a-b-z$ implies $2z=\pm 2$, i.~e.~$z=\pm 1$, which would imply $cd=0$. In the other two cases, we conclude $z=\pm 2i$ in the same manner, which implies $cd+1=-4$ and $cd=-5$, which is impossible for $|c| > \sqrt{5}$. Now one just checks that there is no Diophantine triple $\{a, b, c\}$ such that $2\leqslant |a|, |b|, |c| \leqslant \sqrt{5}$. Analogous reasoning resolves the number ring $\OK = \Z[\frac{-1+\sqrt{-3}}{2}]$.
\end{proof}

\begin{lemma}\label{Omega} If $\{a, b, c, d\}$ is a Diophantine quadruple such that $2 \leqslant |a| \leqslant |b| \leqslant |c| \leqslant |d|$, then $\ds |d|\geqslant \frac{|ab|}{8}\geqslant \frac{|a|^2}{8}$.
\end{lemma}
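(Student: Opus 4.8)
The plan is to combine the preceding lemma, which forbids both triples $\{a,b,c\}$ and $\{a,b,d\}$ from being regular, with a lower bound on the absolute value of any \emph{irregular} extension of the pair $\{a,b\}$. Write $ab+1=r^2$, and for an element $e$ with $\{a,b,e\}$ a Diophantine triple write $ae+1=s^2$, $be+1=t^2$; recall $\{a,b,e\}$ is regular exactly when $e=a+b\pm 2r$. My reduction is: by the preceding lemma at least one of $\{a,b,c\}$ and $\{a,b,d\}$ is irregular, and since $|c|\leqslant|d|$ it suffices to prove the single claim that if $\{a,b,e\}$ is an irregular Diophantine triple with $|e|\geqslant|b|$ and $2\leqslant|a|\leqslant|b|$, then $|e|\geqslant\tfrac{|ab|}{8}$. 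Indeed, if $\{a,b,d\}$ is irregular the claim gives the result directly, while if $\{a,b,c\}$ is irregular the claim gives $|d|\geqslant|c|\geqslant|ab|/8$.

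To prove the claim I would first recall (exactly as for \eqref{Eq:Pellac}--\eqref{Eq:Pellbc}) that eliminating $e$ gives the Pell-type equation $bs^2-at^2=b-a$, whose solutions in $\OK$ are described in \cite{fjel}. Using the standard doubling/Vieta relation, the extensions $e$ of $\{a,b\}$ organise into families $e_k$ obeying $e_{k+1}=(4ab+2)e_k-e_{k-1}+2(a+b)$ with $e_0=0$ and $e_{\pm1}=a+b\pm 2r$ (the two regular extensions); by the classification in \cite{fjel} every extension occurs this way, so an irregular extension is an $e_k$ with $|k|\geqslant 2$. A short computation yields the clean identity
\[ e_2 = 4(ab+1)e_1 - 4r = 4r\,(r e_1 - 1),\qquad e_1=a+b+2r, \]
with an analogous expression for $e_{-2}$. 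The associated homogeneous recurrence $u_{k+1}=(4ab+2)u_k-u_{k-1}$ has characteristic roots of product $1$, so $|e_k|$ grows geometrically in $|k|$ and it is enough to bound $|e_{\pm 2}|$ below. Expanding $e_2=4(a+b)(ab+1)+4(2ab+1)r$ and using $|a|,|b|\geqslant 2$ (hence $|ab|\geqslant 4$, $|r|\geqslant\sqrt 3$), the dominant term $4(a+b)(ab+1)$ has absolute value comparable to $|a||b|^2$, comfortably exceeding $|ab|/8$; the factor $1/8$ is precisely the slack left by the triangle inequality in this estimate.

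The hard part is the possible cancellation when a regular extension is itself small, i.e.\ when $a+b\approx -2r$ and $|e_1|$ (or $|e_{-1}|$) is tiny: then $|e_2|=4|r|\,|re_1-1|$ can be as small as $\approx 4|r|\approx 4\sqrt{|ab|}$, which dips below $|ab|/8$ once $|ab|$ is large. I would neutralise this using the hypothesis $|e|\geqslant|b|$ together with the product relation $e_1e_{-1}=(a-b)^2-4$: a small $|e_1|$ forces $|e_{-1}|$ to be large and pins $(a-b)^2$ near $4$, and the requirement that the extension actually used (namely $c$ or $d$) have absolute value at least $|b|$ simply removes such a small $e_2$ from consideration, since it is then not a candidate for $c$ or $d$; the finitely many genuinely small surviving configurations are bounded in terms of $|a|$ and can be checked directly against the explicit solution list of \cite{fjel}. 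Assembling the generic geometric-growth estimate with this case analysis gives $|e|\geqslant|ab|/8$ for every irregular extension with $|e|\geqslant|b|$, and therefore $|d|\geqslant|ab|/8\geqslant|a|^2/8$, the final inequality being just $|b|\geqslant|a|$.
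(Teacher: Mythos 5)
Your reduction (at least one of $\{a,b,c\}$, $\{a,b,d\}$ is irregular, so it suffices to bound an irregular extension $e$ of the pair $\{a,b\}$ with $|e|\geqslant|b|$) is sound and matches the paper's first step. The genuine gap is the structural claim on which everything after that rests: that by the classification in \cite{fjel} \emph{every} extension of $\{a,b\}$ lies in the single chain $e_k$ generated from $e_0=0$, $e_{\pm1}=a+b\pm2r$, so that ``irregular'' means $|k|\geqslant 2$ and geometric growth of the recurrence finishes the job. Fjellstedt's result does not say this: the solutions of the Pell-type equation $bs^2-at^2=b-a$ fall into \emph{finitely many classes}, each generated by its own fundamental solution under the Pell automorphism, and nothing forces the only class to be the one containing the trivial solution $(s,t)=(\pm1,\pm1)$ (i.e.\ $e=0$). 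An irregular extension can be the smallest member of a nontrivial class, and for such an $e$ your recurrence and growth estimates say nothing; bounding the fundamental solutions of \emph{all} classes is exactly the hard content this route would require (in $\Z$ the analogous statement, mentioned in the paper's remark after the lemma, is a theorem of Jones \cite{Jones} and needs precisely that work). Even in $\Z$ the one-chain picture fails as you state it: for $\{3,8\}$ one has $e_1=21$, yet $120$ extends $\{3,8\}$ — it arises by a jump from the extension $e=1$, not from your linear chain; the extension structure is a tree over several roots. Finally, your patch for the cancellation case — ``the finitely many genuinely small surviving configurations \dots can be checked directly against the explicit solution list of \cite{fjel}'' — is not a finite check: $\OK$ ranges over infinitely many rings and $|a|,|b|$ are unbounded, so ``bounded in terms of $|a|$'' does not close the argument.

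For contrast, the paper's proof avoids any classification of Pell solutions. Writing $ab+1=r^2$, $ad+1=x^2$, $bd+1=y^2$, it forms the two regular-jump candidates $c_\pm=a+b+d+2abd\pm2rxy$ extending $\{a,b,d\}$; irregularity of $\{a,b,d\}$ gives $c_\pm\neq0$, because $c_\pm=0$ forces $d^2-2(a+b)d+(a-b)^2-4=0$, whose roots are exactly $a+b\pm2r$. Then the identity $c_+c_-=a^2+b^2+d^2-2ab-2ad-2bd-4$ yields $|c_+c_-|\leqslant10|d|^2$, while $|c_++c_-|=2|a+b+d+2abd|$ gives $\max\{|c_+|,|c_-|\}\geqslant\frac54|abd|$, and the key arithmetic input is simply that a nonzero element of $\OK$ has absolute value at least $1$ (norm $\geqslant1$), so $1\leqslant|c_-|\leqslant 8|d|/|ab|$. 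That two-line product-and-sum trick is what replaces your entire classification-plus-case-analysis machinery; as written, your proposal does not constitute a proof.
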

\begin{remark}
We conjecture that the stronger claim also holds, if $d\neq a+b+c+2abc\pm 2rst$ (where $s^2=ac+1$ and  $t^2=bc+1$), then $|d|\geqslant 4|ab|$. This claim holds in $\Z$ (see, e.~g.~\cite{Jones}).
\end{remark}
\begin{proof} Using the previous lemma, without loss of generality, we may assume that $\{a, b, d\}$ is not a regular triple. Denote $ab+1=r^2, ad+1=x^2, bd+1=y^2$.

Then $c_{\pm} = a+b+d+2abd\pm2rxy$ is not $0$. Indeed, the claim $c_{\pm}=0$ is equivalent to the claims $a+b+d+2abd = \mp 2rxy$, $(2ab+1)^2d^2+2(a+b)(2ab+1)d+(a+b)^2=4(ab+1)(ad+1)(bd+1)$ and $d^2-2(a+b)d+(a-b)^2-4=0$.
Solutions of this quadratic equation for $d$ are exactly $a+b\pm 2r$. Since $\{a, b, d\}$ is not a regular triple ($d\neq a+b\pm 2r$), we conclude that $c_{\pm} \neq 0$.

Consider now the product $c_+c_- = (a+b+d+2abd)^2-4r^2x^2y^2 = a^2+b^2+d^2-2ab-2ad-2bd-4$.
Hence $|c_+c_-| \leqslant |a|^2+|b|^2+|d|^2+2|ab|+2|ad|+2|bd|+4$. Since $|d|\geqslant |a|, |b|$ and $|d|^2 \geqslant 4$ (there is no Diophantine quadruple with the largest element less than $2$ by absolute value), it follows that $|c_+c_-| \leqslant 10|d|^2$.

On the other hand, $|c_++c_-|=2|a+b+d+2abd|$. Assume that $|c_+|\geqslant |c_-|$. Then $2|c_+|\geqslant |c_+|+|c_-| \geqslant |c_++c_-|$ and $|c_+| \geqslant |a+b+d+2abd|$.

Since $|b|\geqslant|a|\geqslant 2$, it follows that $|ab|\geqslant 4$, so $\ds |a+b+d| \leqslant 3|d| \leqslant \frac 34 |abd|$. We conclude that \[ |c_+| \geqslant |a+b+d+2abd| \geqslant 2|abd|-|a+b+d| \geqslant 2|abd|-\frac 34 |abd| = \frac 54 |abd|.\]

\noindent Juxtaposition of this lower bound on $|c_+|$ with the upper bound on $|c_+c_-|$ implies that $\ds |c_-| \leqslant \frac{10|d|^2}{|c_+|} \leqslant \frac{10|d|^2}{\frac 54 |abd|} = \frac{8|d|}{|ab|}$. Because $c_-\neq 0$, it follows that $|c_|\geqslant 1$, so we can conclude that $\ds \frac{8|d|}{|ab|} \geqslant |c_-| \geqslant 1$ and $\ds |d|\geqslant \frac{|ab|}{8} \geqslant \frac{|a|^2}{8}$. 
\end{proof}

\section{An upper bound on the size of Diophantine $m$-tuple}
\label{sec:main}
Here we prove our main result, the following theorem.
\begin{theorem}\label{thm:g}
There is no Diophantine $m$-tuple in imaginary quadratic number ring $\OK$ with $m\geqslant 43$.
\end{theorem}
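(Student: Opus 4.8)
The plan is to argue by contradiction: assume a Diophantine $m$-tuple exists with $m \geq 43$, order its elements as $|a_1| \leq |a_2| \leq \cdots \leq |a_m|$ (all nonzero by assumption), and show that Lemma \ref{Omega} and Proposition \ref{prop:alpha} cannot coexist. The engine is the observation that Proposition \ref{prop:alpha} bounds \emph{every} element of the tuple, not merely one distinguished fourth element: if a triple $\{a,b,c\}$ drawn from the tuple (with $|a|\leq|b|\leq|c|$) satisfies $|ac|\geq 9$, $|b|\geq\frac32|a|$, $|b|>5$, and $|c|>|b|^{15}$, then for each remaining element $a_\ell$ the set $\{a,b,c,a_\ell\}$ is a Diophantine quadruple, so $|a_\ell| < 4278^{20}|c|^{50}$. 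In particular the largest element satisfies $|a_m| < 4278^{20}|c|^{50}$, a bound that is only polynomial in $|c|$.

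First I would extract the growth forced from below. Applying Lemma \ref{Omega} to four consecutive elements $\{a_i,a_{i+1},a_{i+2},a_{i+3}\}$ (each of absolute value at least $2$) gives $|a_{i+3}| \geq |a_i a_{i+1}|/8 \geq |a_i|^2/8$. Setting $y_i = \log|a_i| - \log 8$, this reads $y_{i+3} \geq 2y_i$, so once an element exceeds $8$ in absolute value the quantity $y_i$ at least doubles every three steps; iterating, $|a_{i+3t}| \geq |a_i|^{2^t}/8^{2^t-1}$. Thus the absolute values grow doubly exponentially in the index, which makes the gap hypothesis unavoidable: taking $t=4$ yields $|a_{i+12}| \geq |a_i|^{16}/8^{15} > |a_i|^{15}$ as soon as $|a_i| > 8^{15}$. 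Hence, writing $i$ for the first index with $|a_i| > 8^{15}$, the triple $\{a_1, a_i, a_{i+12}\}$ satisfies all hypotheses of Proposition \ref{prop:alpha} (the auxiliary conditions $|a_1 a_{i+12}|\geq 9$, $|a_i|\geq \frac32|a_1|$, and $|a_i|>5$ being immediate from $|a_i|>8^{15}$), so every element of the tuple is smaller than $4278^{20}|a_{i+12}|^{50}$.

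The contradiction then comes from continuing the doubling past $c=a_{i+12}$: with base index $i+12$ and $t=6$, the iterated lower bound gives $|a_{i+30}| \geq |a_{i+12}|^{64}/8^{63}$, which exceeds $4278^{20}|a_{i+12}|^{50}$ because $|a_{i+12}| > 8^{225}$ forces the surplus factor $|a_{i+12}|^{14}$ to dominate $4278^{20}\cdot 8^{63}$. If the element $a_{i+30}$ exists, this contradicts the universal bound of the previous paragraph; hence $m \leq i+29$. It then remains only to bound $i$, and this is where the stated constant is pinned down: one must show that the tuple contains at most $12$ elements of absolute value at most $8^{15}$, so that $i\leq 13$ and therefore $m \leq i+29 \leq 42$.

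The main obstacle is precisely this final bookkeeping. Two points require care. First, Lemma \ref{Omega} needs all four elements to have absolute value at least $2$, and the doubling $y_{i+3}\geq 2y_i$ only bites once $|a_i|>8$ (for $|a|\leq 8$ one has $|a|^2/8\leq |a|$, so the recursion is vacuous); thus elements that are units or of small norm, and more generally all tuple elements in the band $2 \leq |a_\ell| \leq 8^{15}$, must be counted directly, uniformly over the field $K$, to secure the bound $i\leq 13$. Second, one must verify that the threshold $8^{15}$, the twelve-step delay needed to trigger the gap hypothesis, and the further steps needed to overrun the polynomial bound combine to exactly $42$ rather than something larger; sharpening the crude estimates above, or choosing the base triple more economically, is what fixes the precise value.
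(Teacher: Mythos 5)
Your overall architecture is the right one and matches the paper's: iterate Lemma \ref{Omega} to get doubly-exponential growth, trigger Proposition \ref{prop:alpha} once the gap condition $|c|>|b|^{15}$ appears, note that the Proposition bounds \emph{every} later element polynomially in $|c|$, and let a further doubling chain overrun that bound. But there is a genuine gap at the exact point you flag as ``bookkeeping'': the claim that the tuple contains at most $12$ elements of absolute value at most $8^{15}$ (so $i\leqslant 13$) is not proved, and it is not obtainable from the paper's ingredients. The paper's only uniform small-element input is a computer verification that no Diophantine quintuple has all elements of absolute value at most $16$ --- feasible precisely because for $|D|>32$ such elements are rational integers and for $|D|>256$ the equations $a_ia_j+1=-Dy^2$ are impossible, reducing to finitely many discriminants; an analogous search up to $8^{15}\approx 3.5\cdot 10^{13}$ is hopeless. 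Nor can you bootstrap from the $16$-threshold: Lemma \ref{Omega} gives roughly $x\mapsto x^2/8$ every three indices, so starting from $|a_5|\geqslant 16$ you only reach $8^{15}$ after about $18$ further indices, giving $i\leqslant 23$ and hence $m\leqslant i+29\leqslant 52$, not $42$. The paper avoids ever needing a count up to $8^{15}$ by choosing the base triple at \emph{small} indices and using a \emph{longer} chain: it applies the Proposition to $\{a_4,a_7,a_{25}\}$, where the $18$-step chain $a_7\to a_{25}$ yields $|a_{25}|\geqslant |a_7|^{64}/8^{63}$, and the condition $|a_{25}|>|a_7|^{15}$ then follows already from $|a_7|\geqslant 16$ (one needs $|a_7|^{49}>8^{63}$, i.e.\ $|a_7|>14.5$). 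A second $18$-step chain $a_{25}\to a_{43}$ gives $|a_{43}|\geqslant |a_{25}|^{64}/8^{63}>4278^{20}|a_{25}|^{50}$, contradicting the polynomial bound; this is where $43$ comes from. In short: trade your $12$-step chain (exponent $16$, threshold $8^{15}$) for an $18$-step chain (exponent $64$, threshold $16$), and the unprovable counting hypothesis disappears.

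A secondary error: the assertion that $|a_i|\geqslant \frac32|a_1|$ is ``immediate from $|a_i|>8^{15}$'' is false. Nothing prevents $|a_1|>\frac23|a_i|$ (and if $i=1$ your triple $\{a_1,a_i,a_{i+12}\}$ degenerates outright). The paper secures the ratio condition of Proposition \ref{prop:alpha} by applying Lemma \ref{Omega} to $\{a_4,a_5,a_6,a_7\}$: $|a_7|\geqslant |a_4a_5|/8\geqslant 2|a_4|$, using $|a_5|\geqslant 16$ --- note it takes $a=a_4$ rather than $a=a_1$, both because Lemma \ref{Omega} needs $|a|\geqslant 2$ (which $a_1$, possibly a unit, need not satisfy) and because the computer check guarantees $|a_4|\geqslant 2$. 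Your version is patchable by the same device (take $b$ three or more indices beyond $a$ and invoke Lemma \ref{Omega}), but as written the hypothesis verification fails, and together with the missing count the constant $42$ is never actually established --- as your final paragraph itself concedes.
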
 
\begin{proof}
Assume the contrary, that there is a Diophantine $m$-tuple $\{a_1, a_2, \dotsc, a_m\}$ sorted by absolute value ($0 < |a_1| \leqslant \dots \leqslant |a_m|$) and $m\geqslant 43$.
By using the computer, we have checked that there is no Diophantine quintuple with absolute value of elements at most $16$. For a fixed $D$ it is clear that one can do this. For $|D| > 32$ all the elements of absolute value less or equal than $16$ are real ones. On the other hand, for $|D|>256$, it is not possible that $a_ia_j+1=(y\sqrt{-D})^2=-Dy^2$ since $|a_ia_j+1|\leqslant 257$.  Therefore, $|a_4|\geqslant 2$, $|a_5|\geqslant 16$. Now we repeatedly apply Lemma \ref{Omega} on different subsets of $\{a_1, a_2, \dotsc, a_m\}$, and each obtained inequality is used in the next appliance:
\begin{align*}
\{a_7, a_8, a_9, a_{10}\}   &\overset{\text{\scriptsize{Lemma }} \ref{Omega}}{\implies} |a_{10}| \geqslant \frac{|a_7|^2}{8}\\
\{a_{10}, a_{11}, a_{12}, a_{13}\}   & {\implies} |a_{13}|  \geqslant \frac{|a_{10}|^2}{8} \geqslant \frac{|a_7|^4}{8^3}\\
\{a_{13}, a_{14}, a_{15}, a_{16}\}   & {\implies} |a_{16}|\geqslant \frac{|a_{13}|^2}{8} \geqslant \frac{|a_7|^8}{8^7}\\
& \quad \vdots\\
\{a_{22}, a_{23}, a_{24}, a_{25}\}  & {\implies}  |a_{25}| \geqslant \frac{|a_7|^{64}}{8^{63}}\\
\end{align*}
\vskip -2em

Let us now show that we can apply the Proposition \ref{prop:alpha} on $\{a_4, a_7, a_{25}, a_{25+k}\}$ for $k>0$ since $|a_4| \geqslant 2$ and $|a_5| > 12$. Namely, by using the Lemma \ref{Omega} on $\{a_4, a_5, a_6, a_7\}$, we can conclude that $|a_7| \geqslant \frac{|a_4a_5|}{8} > \frac 32|a_4|$. Previously obtained inequality guarantees that $|a_{25}| > |a_7|^{15}$ (it suffices to show that $\ds \frac{|a_7|^{64}}{8^{63}} > |a_7|^{15}$, i.~e.~$|a_7| > 14.5$, which holds because $|a_7|\geqslant |a_5| \geqslant 16$). The conditions of the Proposition \ref{prop:alpha} hold.

Therefore, the Proposition \ref{prop:alpha} implies \begin{equation}\label{ineq24} |a_{25+k}| < 4278^{20}|a_{25}|^{50}.\end{equation}

However, we can continue to apply Lemma $\ref{Omega}$:
\begin{align*}
\{a_{25}, a_{26}, a_{27}, a_{28}\} &{\implies} |a_{28}| \geqslant \frac{|a_{25}|^2}{8} \\
\{a_{28}, a_{29}, a_{30}, a_{31}\} &{\implies} |a_{31}| \geqslant \frac{|a_{28}|^2}{8}\geqslant \frac{|a_{25}|^4}{8^3} \\
\dots &{\implies} |a_{43}| \geqslant  \frac{|a_{25}|^{64}}{8^{63}}  > 4278^{20} |a_{25}|^{50},
 \end{align*}
which contradicts the inequality \eqref{ineq24}. Namely, the last inequality is equivalent to $|a_{25}|^{14}\geqslant 8^{63}\cdot 4278^{20}$, i.~e.~it is true for $|a_{25}| > 1.784 \cdot 10^9$, which holds since $\ds |a_{25}| \geqslant \frac{|a_7|^{64}}{8^{63}}\geqslant \frac{16^{64}}{8^{63}}$. 
\end{proof}

Let us note here that computer search did not yield any Diophantine quintuple in imaginary quadratic numbers rings, nor has the more systematic search by P.~E.~Gibbs found Diophantine quintuples in $\Z[\sqrt{-d}]$ for positive integer $d < 50$ (as reported on the ResearchGate).
Even so, we do not see any a priori reason why would all the imaginary quadratic number rings have the largest Diophantine $m$-tuple of the same size $m$. The method used here most likely will not suffice to prove the strongest upper bound, even in the more specific situation of Gaussian integers. However, it is interesting that we have managed to find such a simple proof of the first uniform bound on the size od Diophantine $m$-tuple (in imaginary quadratic number rings) because such results usually required more complex proofs involving linear forms in logarithms (see, for example, \cite{duje}, for the first proof of the uniform upper bound in integers).

\section*{Acknowledgements}
This work was supported by the Croatian Science Foundation under the project no. 6422.

The author would like to thank Andrej Dujella, Alan Filipin and Zrinka Franu\v si\' c for motivating this research and suggesting numerous improvements to both the proof and the manuscript text.


\begin{thebibliography}{10}

\bibitem{baker}
A.~Baker and H.~Davenport, \emph{The equations $3x^2 - 2 = y^2$ and $8x^2 - 7 =
  z^2$}, Quart. J. Math. Oxford Ser. (2) \textbf{20}, no.~2, 129--137 
  (1969).


\bibitem{BFT}
A.~Bayad, A.~Filipin, and A.~Togb{\'e}, \emph{Extension of a parametric family
  of {D}iophantine triples in {G}aussian integers}, Acta Math. Hungar.
  \textbf{148}, no.~2, 312--327 (2016).

\bibitem{duje}
A.~Dujella, \emph{An absolute bound for the size of {D}iophantine $m$-tuples},
  J. Number Theory \textbf{89}, 126--150 (2001).

\bibitem{sestorke}
A.~Dujella, M.~Kazalicki, M.~Mikić, and M.~Szikszai, \emph{There are
  infinitely many rational {D}iophantine sextuples},  Int. Math. Res. Not. IMRN \textbf{2017 (2)}, 490--508 (2017).

\bibitem{fjel}
L.~Fjellstedt, \emph{On a class of {D}iophantine equations of second degree in
  imaginary quadratic fields}, Ark. Mat. \textbf{2}, no.~24,
  435--461 (1953).

\bibitem{Zrinka}
Z.~Franušić, \emph{On the extensibility of {D}iophantine triples $\{ k-1,
  k+1, 4k\}$ for {G}aussian integers}, Glas. Mat. Ser. III \textbf{43}
 , no.~2, 265--291 (2008).

\bibitem{nemapetorke}
B.~{He}, A.~{Togb\'{e}}, and V.~{Ziegler}, \emph{{There is no Diophantine
  quintuple}}, Trans. Amer. Math. Soc. (to appear).

\bibitem{borka}
B.~Jadrijević and V.~Ziegler, \emph{A system of relative {P}ellian equations
  and a related family of relative {T}hue equations}, Int. J. Number Theory \textbf{2}, no.~4, 569--590 (2006).

\bibitem{Jones}
B.~W. Jones, \emph{A second variation on a problem of {D}iophantus and
  {D}avenport}, Fibonacci Quart. \textbf{16}, 155--165 (1978).
  
\bibitem{siegel}
C.~L. Siegel, \emph{Uber einige anwendungen diophantischer approximationen}, On
  some applications of Diophantine approximations: a translation of Carl Ludwig
  Siegel's \"Uber einige Anwendungen diophantischer Approximationen by Clemens
  Fuchs, with a commentary and the article Integral points on curves: Siegel's
  theorem after Siegel's proof by Clemens Fuchs and Umberto Zannier
  (U.~Zannier, ed.), Pisa: Edizioni della Normale, pp.~81--138 (2014).
\end{thebibliography}
\end{document}